\newtheorem{thm}{Theorem}[section]
\newtheorem{lem}[thm]{Lemma}
\newtheorem{prop}[thm]{Proposition}
\theoremstyle{definition}
\newtheorem{defn}[thm]{Definition}
\newtheorem{rmk}[thm]{Remark}
\numberwithin{equation}{section}
\newcommand\be{\begin{equation}}
\newcommand\ba{\begin{eqnarray}}
\newcommand\ee{\end{equation}}
\newcommand\ea{\end{eqnarray}}
\def\C{{\mathbb C}}
\def\Q{{\mathbb Q}}
\def\Z{{\mathbb Z}}
\def\P{{\mathbb P}}
\DeclareMathOperator{\Per}{Per}
\DeclareMathOperator{\PGL}{PGL}
\DeclareMathOperator{\ord}{ord}
\title[]
{Preimages Question for Surjective Endomorphisms on $(\P^1)^n$}
\author{Xiao Zhong}
\address{University of Waterloo \\
Department of Pure Mathematics \\
Waterloo, Ontario \\
Canada  N2L 3G1}
\thanks{The author was supported in part by NSERC grant RGPIN-2022-02951}
\email{x48zhong@uwaterloo.ca}
\begin{document}
\keywords{Preimages Question, invariant subvarieties, arithmetic dynamics, rational points on varieties}
\subjclass[2020]{ 37P55, 14G05}
\begin{abstract}
    Let $K$ be a number field and let $f : (\P^1)^n \to (\P^1)^n$ be a dominant endomorphism defined over $K$.  
    We show that if $V$ is an $f$-invariant subvariety (that is, $f(V)=V$) then there is a positive integer $s_0$ such that 
     $ (f^{-s-1}(V)\setminus f^{-s}(V))(K) = \emptyset$
    for every integer $s \geq s_0$, answering the Preimages Question of Matsuzawa, Meng, Shibata, and Zhang in the case of $(\P^1)^n$.
\end{abstract}
\maketitle
\section{introduction}
Let $X$ be a projective variety and let $f: X \to X$ be a surjective self-maps such that both $X$ and $f$ are defined over a number field $K$. To study the dynamics of $(X, f)$, it is important to identify the closed subvarieties of $Y \subseteq X$ that are invariant under $f$; i.e., subvarieties with $f(Y) \subseteq Y$. For an invariant subvariety $Y$ for the map $f$, it is natural to study its preimages under iterates of $f$. An important principle within arithmetic dynamics is that the underlying geometric structure should exert significant influence on the arithmetic structure, which gives rise to the expectation that the tower of $K$-points: 
$$Y(K) \subseteq (f^{-1}(Y))(K) \subseteq (f^{-2}(Y))(K) \subseteq \cdots $$
should eventually stabilize. This expectation has been made precise in the form of the Preimages Question of Matsuzawa, Meng, Shibata, and Zhang \cite[Question 8.4(1)]{Preimages}.

There are recent works dealing with some special cases of the Preimages Question. Notably, it has been solved with affirmative answer when $X$ is a smooth variety with non-negative Kodaira dimension and $f$ is \'etale \cite[Theorem 1.2]{DynamicalC}. Additionally, if we consider the case when $f = (g,g): X \times X\to X\times X$ is a diagonal map, with $g: X \to X$ is a surjective morphism, then the diagonal subvariety $\Delta \subseteq X \times X$ is invariant subvariety under $f$, and in this special case the Preimages Question becomes a cancellation problem, which asks whether there exists a natural number $s$ with the property that, for all $x, y \in X(K)$, if $g^n(x) = g^n(y)$ for some natural number $n$ then we must in fact have $g^s(x) = g^s(y)$. This special form is a dynamical cancellation problem considered in the work of Bell, Matsuzawa, and Satriano \cite{DynamicalC}, and this question is again answered affirmatively when $X$ is a curve \cite[Theorem 1.3]{DynamicalC} by applying $p$-adic uniformization techniques of Rievera-Letelier \cite{Uniformization}. Considerably more is known for polynomial maps on $\P^1$, and a more general dynamical cancellation results allowing multiple polynomial self-maps on $\P^1$ is also proved along these lines in \cite[Theorem 1.7]{DynamicalC} and \cite[Theorem 1.2]{dynamicalCanPoly}.

Our main result is to give a positive solution to the Preimges Question for surjective endomorphisms on $(\P^1)^n$. It is well-known that a surjective endomorphism of $(\P^1)^n$ has some iterate that becomes a split rational map $(f_1,\ldots, f_n)$.  Since answering the Preimages question for an iterate of a self-map gives one the answer for the original map, our main theorem, stated below, solves the Preimges Question for surjective endomorphisms on $(\P^1)^n$.
\begin{thm} \label{thm: mainfull}
    Let $K$ be a number field, let $n\ge 1$, and let $f = (f_1, \dots, f_n) : (\P^1_K)^n \to (\P^1_K)^n$ be a split rational map defined over a number field $K$ with at least one $f_i$ of degree greater than $1$.  If $V \subseteq (\P^1)^n$ is a subvariety defined over $K$ that is invariant under $f$ then there exists a non-negative integer $s_0$ such that 
     $$ (f^{-s-1}(V) \setminus f^{-s}(V))(K)  = \emptyset$$
     for all $s \geq s_0$.
\end{thm}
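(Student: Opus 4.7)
The plan is to argue by induction on $n$, combining Northcott's theorem on preperiodic points for degree-$>1$ self-maps of $\P^1$ with the Medvedev--Scanlon structure theorem for invariant subvarieties of split endomorphisms of $(\P^1)^n$. After replacing $f$ by a suitable iterate (which only enlarges $s_0$) and splitting $V$ into its irreducible components, I would reduce to the case where $V$ is irreducible, proper, and satisfies $f(V)=V$. For the base case $n=1$, the hypothesis forces $\deg f>1$, so a proper $f$-invariant $V\subsetneq \P^1$ is a finite set of $f$-periodic points; every point of $f^{-s}(V)$ is then preperiodic for $f$, and Northcott's theorem bounds $\Preper(f)(K)$ to a finite set, so the ascending chain $(f^{-s}(V))(K)$ must stabilize.

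For the inductive step, let $\pi_i\colon (\P^1)^n\to \P^1$ be the $i$-th projection; each $\pi_i(V)$ is $f_i$-invariant. Suppose first that some $i$ with $\deg f_i>1$ has $\pi_i(V)$ finite. Then $\pi_i(V)$ consists of $f_i$-periodic points, $V$ lies in the union of the fibers over them, and any $K$-rational point of $f^{-s}(V)$ has $i$-th coordinate in $\Preper(f_i)(K)$, which by Northcott is a finite set $c_1,\dots,c_\ell$. After enlarging $s_0$ so each $c_k$ has entered the $f_i$-cycle of $\pi_i(V)$, every fiber $\pi_i^{-1}(c_k)\cong (\P^1)^{n-1}$ is defined over $K$, carries a power of $f$ as a split dominant endomorphism, and inherits an $f$-invariant subvariety; the induction hypothesis on $n-1$ then closes this subcase (with the possibility that the remaining factors are all of degree one needing separate but elementary analysis via Möbius dynamics).

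The main obstacle is the remaining case, in which $\pi_i(V)=\P^1$ for every $i$ with $\deg f_i>1$. The Medvedev--Scanlon classification should force $V$ to be cut out by finitely many binary relations $h_{ij}(x_i)=h_{ji}(x_j)$, where the $h_{ij}$ are polynomials commuting (up to normalization) with iterates of the $f_i$'s; projecting away a judiciously chosen coordinate then realizes $V$ birationally as an invariant subvariety inside a smaller product of $\P^1$'s, at which point the induction is meant to apply. The real difficulty is arithmetic: the preimages $f^{-s}(V)$ can pick up many genuinely new irreducible components as $s$ grows, each potentially supporting new $K$-rational points. To control this branching, I would propagate the dynamical cancellation results of Bell--Matsuzawa--Satriano for $\P^1$ (\cite[Theorems~1.3 and 1.7]{DynamicalC}) through the commuting-polynomial relations defining $V$, forcing the tree of new $K$-rational preimages to terminate at bounded depth and thereby closing the induction on $n$.
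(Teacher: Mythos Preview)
Your overall induction scheme is reasonable as a skeleton, and the Northcott-based arguments for the base case and for the subcase where some projection $\pi_i(V)$ is finite are essentially fine. The genuine gap is in your handling of the ``main obstacle'' case, where $\pi_i(V)=\P^1$ for every $i$ with $\deg f_i>1$.

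First, you invoke the Medvedev--Scanlon classification, but that theorem is for split \emph{polynomial} maps whose coordinates are \emph{disintegrated} (not conjugate to a power map, $\pm T_d$, or a Latt\`es map). The present theorem is about arbitrary rational $f_i$, so Medvedev--Scanlon does not apply. More seriously, the paper's reduction (via Lemmas \ref{lem: equal-degree} and \ref{lem: red-to-surface} together with the results of Ghioca--Nguyen and Ghioca--Nguyen--Ye) shows that once $V$ is an irreducible hypersurface projecting dominantly onto every set of $n-1$ axes and $n>2$, the $f_i$ are \emph{forced} to be either all Latt\`es, or all conjugate to $x^{\pm d}$ or $\pm T_d$. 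In other words, the generic situation you are trying to handle with Medvedev--Scanlon is exactly the situation Medvedev--Scanlon excludes; the ``binary relations $h_{ij}(x_i)=h_{ji}(x_j)$'' picture is not available, and there is no coordinate to project away that makes the induction close.

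What the paper actually does in these two exceptional regimes is quite different from anything in your sketch. In the power/Chebyshev case (Proposition \ref{prop : toris}) one lifts $V$ through the map $\mu$ to a torsion translate of a subtorus in $(\C^*)^n$ and controls the preimage tower by bounding the roots of unity that can arise in fields of bounded degree over $K$. In the Latt\`es case (Proposition \ref{prop: Lattes}) one lifts to a product of elliptic curves, invokes Manin--Mumford to see the lift is a translate of an abelian subvariety, and then applies the \'etale preimages theorem of Bell--Matsuzawa--Satriano. Only in the remaining $n=2$ non-Latt\`es case does the paper use anything like your cancellation idea, and even there it requires Pakovich's semiconjugacy decompositions (equations \eqref{eq: decompF_1}--\eqref{eq: decompF_3}) to reduce to a diagonal $(B,B)$ acting on $\Delta$, after which the bounded-degree version of dynamical cancellation (Proposition \ref{prop: cancellation=curve}) finishes. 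Your proposal does not supply any of these ingredients, and the vague plan to ``propagate cancellation through the commuting-polynomial relations'' does not substitute for them.
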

It is also well known that Theorem \ref{thm: mainfull} holds when $n=1$.
\begin{rmk}
    This paper works over a number field $K$ for the reader's convenience, but the argument also works if we let $K$ instead be a finitely generated extension of $\Q$. Proposition \ref{prop : toris} uses the fact that the set of roots of unity in the union of all finite field extensions of $\Q$ of bounded degree is finite and it remains true that the set of roots of unity inside the union of field extensions of bounded degree over $K$ is finite when $K$ is a finitely generated extension over $\Q$. Proposition \ref{prop: Lattes} uses \cite[Theorem 2.3]{DynamicalC} whose proof is based on embedding $K$ into a finite extension of $\Q_p$ for a suitable prime $p$, which works for any finitely generated field extension of $\Q$ as well (see \cite[Proposition 2.5.3.1]{DML-Book}). Everything else goes through directly without additional changes when considering $K$ as a finitely generated field extension of $\Q$. 
\end{rmk}
\section{Proof of the Main Theorem}
In this section we give the proof of the main result.  Our first result shows that we can reduce to the case when our maps all have degree $\ge 2$.
\begin{lem}\label{lem:reduce-to-alldegreenontrivial}
    Let $n\ge 2$ be a natural number and let $f = (f_1, \dots, f_n) : (\P^1)^n \to (\P^1)^n$ be a split rational map defined over a algebraically closed characteristic zero field and suppose that there exists a positive integer $k \in \{1, \dots ,n-1\}$ such that $\deg(f_i) > 1$ when $1 \leq i \leq k$ and $\deg(f_i) = 1$ when $i > k$. If $V$ is an irreducible subvariety of $(\P^1)^n$ that is invariant under $f$ then there exist subvarieties $V_1 \subseteq (\P^1)^k$ and $V_2 \subseteq (\P^1)^{n-k}$ such that $V=V_1\times V_2$ and such that  $V_1$ is invariant under $(f_1, \dots, f_k)$ and $V_2$ is invariant under $(f_{k+1}, \dots, f_n)$.
\end{lem}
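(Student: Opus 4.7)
The plan is to set $V_1 := \pi_1(V) \subseteq (\P^1)^k$ and $V_2 := \pi_2(V) \subseteq (\P^1)^{n-k}$, where $\pi_1, \pi_2$ are the natural projections. Both are irreducible since $V$ is, and $V \subseteq V_1 \times V_2$ as closed subsets. Writing $h := (f_1, \dots, f_k)$ and $g := (f_{k+1}, \dots, f_n)$, the equalities $\pi_1 \circ f = h \circ \pi_1$ and $\pi_2 \circ f = g \circ \pi_2$, together with $f(V) = V$, immediately give $h(V_1) = V_1$ and $g(V_2) = V_2$. Since each $f_j$ for $j > k$ is a degree-one self-map of $\P^1$ and therefore lies in $\PGL_2$, the map $g$ is an automorphism of $(\P^1)^{n-k}$, and consequently $g|_{V_2}$ is an automorphism of $V_2$. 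This establishes the invariance assertions of the lemma; what remains is to prove the product decomposition $V = V_1 \times V_2$.

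Since $V_1 \times V_2$ is irreducible and $V$ is a closed irreducible subvariety, it suffices to show $\dim V = \dim V_1 + \dim V_2$. I argue by contradiction and assume $\dim V < \dim V_1 + \dim V_2$. Then the generic fiber $F_{v_1} := \pi_2\bigl(V \cap \pi_1^{-1}(v_1)\bigr)$ of $\pi_1|_V$ is a proper subvariety of $V_2$. Combining $f(V) \subseteq V$, the surjectivity of $f|_V$, and the invertibility of $g$, a short diagram chase gives for generic $v_1 \in V_1$ the identity
\[ g^{-1}(F_{v_1}) \;=\; \bigcup_{v_1' \in h^{-1}(v_1)} F_{v_1'}. \]
Assuming the generic $F_{v_1}$ is irreducible (the general case reduces to this by passing to irreducible components via Stein factorization), the left-hand side is irreducible, which forces every $F_{v_1'}$ with $v_1' \in h^{-1}(v_1)$ to equal $g^{-1}(F_{v_1})$. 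Equivalently, the rational map $\Phi : V_1 \to \mathrm{Hilb}(V_2)$ sending $v_1 \mapsto F_{v_1}$ satisfies the equivariance $\Phi \circ h = g_* \circ \Phi$, where $g_*$ is the automorphism of $\mathrm{Hilb}(V_2)$ induced by $g$. Since $g_*$ is bijective, $\Phi$ is automatically constant on generic fibers of $h$, so it factors as $\Phi = \Phi^{(1)} \circ h$; iterating gives $\Phi = \Phi^{(m)} \circ h^m$ with $\Phi^{(m)} = g_*^{-m} \circ \Phi$ for every $m \ge 1$.

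The main obstacle is converting this iterated factorization into a contradiction. If $\Phi$ is generically finite onto its image, multiplicativity of degree gives $\deg \Phi = \deg \Phi^{(m)} \cdot (\deg h)^m = \deg \Phi \cdot (\deg h)^m$ for every $m$, forcing $(\deg h)^m = 1$, which contradicts $\deg h \ge 2$. If the image of $\Phi$ is zero-dimensional, then $F_{v_1}$ equals a constant subvariety $F_0 \subseteq V_2$, so $V = V_1 \times F_0$ and $F_0 = \pi_2(V) = V_2$, directly contradicting the dimension hypothesis. The delicate case is when $\Phi$ has positive-dimensional image $Z \subseteq \mathrm{Hilb}(V_2)$ with $\dim Z < \dim V_1$: we then have a nontrivial $(h, \sigma)$-equivariant dominant rational map $V_1 \to Z$ with $\sigma$ an automorphism of $Z$ induced by $g$. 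Ruling this out is expected to use a polarization/numerical argument: pulling back an ample class from $Z$ via $\Phi$ yields a nonzero nef class on $V_1$ that is $h^*$-invariant up to numerical equivalence, but $h^* \O(1, \dots, 1) = \O(\deg f_1, \dots, \deg f_k)$ with every $\deg f_i \ge 2$ forces $h^*$ to act with all eigenvalues $\ge 2$ on $\NS(V_1)_\R$, leaving no room for a nonzero invariant class and giving the required contradiction.
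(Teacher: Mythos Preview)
Your setup is sound, cases~1 and~2 are essentially correct (in case~1 you should write $\deg(h|_{V_1})$ rather than $\deg h$, and justify $\deg(h|_{V_1})\ge 2$ when $\dim V_1>0$; this follows from $(h^*\alpha_1)^{d_1}\cdot[V_1]\ge 2^{d_1}\,\alpha_1^{d_1}\cdot[V_1]$ with $\alpha_1=\O(1,\dots,1)|_{V_1}$). The genuine gap is case~3. The equivariance $g_*\circ\Phi=\Phi\circ h$ gives only $h^*(\Phi^*A)=\Phi^*\bigl((g_*)^*A\bigr)$, and there is no reason the automorphism $\sigma=g_*$ of $Z\subseteq\mathrm{Hilb}(V_2)$ should fix $A$ numerically: automorphisms can act with infinite order and unbounded orbits on the N\'eron--Severi group. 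So $\Phi^*A$ is not an $h^*$-invariant class, and your eigenvalue argument does not apply to it. Separately, the assertion that $h^*$ has all eigenvalues $\ge 2$ is true on $\NS\bigl((\P^1)^k\bigr)_\R$, but you need it on $\NS(V_1)_\R$, and the restriction map between these need be neither injective nor surjective; in particular $\Phi^*A$ has no reason to lie in its image.

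What would close the gap is a growth-rate comparison: show that for a nonzero nef $L=\Phi^*A$ the intersection number $(h^m)^*L\cdot\alpha_1^{d_1-1}$ grows at least like $c^m$ with $c\ge 2$ (from the $h$-side), while the identity $(h^m)^*L=\Phi^*(\sigma^{m*}A)$ forces it to be $O(\lambda_1(\sigma)^m)$, and then argue that $\lambda_1(\sigma)=1$ because $\sigma$ is induced by an automorphism of $(\P^1)^{n-k}$. Making the second step rigorous on $\mathrm{Hilb}(V_2)$ is not straightforward. The paper sidesteps all of this by working directly on $(\P^1)^n$: it expands $\deg(f^m|_V)\cdot V\cdot(u_1\alpha_1+u_2\alpha_2)^d$ and compares the coefficients of $u_1^{d-d_2}u_2^{d_2}$ and of $u_1^{d+d_1'-d_2}u_2^{d_2-d_1'}$ (for a suitable $d_1'>0$ produced by slicing $V$), obtaining two formulas for $\deg(f^m|_V)$ whose exponential growth rates differ because $c_i=1$ for $i>k$ and $c_i\ge 2$ for $i\le k$. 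That numerical argument replaces your entire Hilbert-scheme construction and handles all three of your cases at once.
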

\begin{proof}
    The proof is similar to \cite[Proposition 3.14]{Xie2023}. Let $d$ denote the dimension of $V$ and let $\pi_1 : (\P^1)^n \to (\P^1)^k$ and $\pi_2 : (\P^1)^n \to (\P^1)^{n-k}$ be respectively the projection onto the first $k$ factors and the projection onto the last $n-k$ factors. 
    
    Let $V_i = \pi_i(V)$ and $d_i = \dim(V_i)$ for $i=1,2$. Since $V\subseteq V_1\times V_2$ we have $d \leq d_1 + d_2$ and $d = d_1 + d_2$ if and only if $V = V_1 \times V_2$. 
    
 Thus we may assume without loss of generality that $d < d_1 + d_2$. We take $\alpha_1$ to be numerical class of the ample line bundle $\pi_1^*\mathcal{O}_{(\P^1)^k}(1,1, \dots,1)$ and $\alpha_2$ to be the numerical class of the ample line bundle $\pi_2^*\mathcal{O}_{(\P^1)^{n-k}}(1,1, \dots,1)$.  Furthermore, we let $\beta_j$ denote the numerical class of the line bundle $p^*_{j}\mathcal{O}_{\P^1}(1)$ for each $j \in \{1, 2, \dots, n\}$ and $p_j : (\P^1)^n \to \P^1$ the projection onto the $j$-th coordinate. Then 
    \begin{equation}
        \alpha_1 = \beta_1 + \cdots + \beta_k,
    \end{equation}
    \begin{equation}
        \alpha_2 = \beta_{k+1} + \cdots + \beta_n.
    \end{equation}
    Notice that for any $j \in \{0, 1, \dots, d\}$, $\alpha^j_1 \cdot \alpha^{d-j}_2 \cdot V \geq 0$ and it is positive if $j = d_1$ or $d-j = d_2$. Let's denote $I_1 = \{1,2, \dots, k\}$ and $I_2 = \{k+1, \dots, n\}$ from now on.

    Since there are only finitely many collections of indices (allowing repetition) of size $d-d_2$ which is inside $I_1$ and also only finitely many collections of indices (allowing repetition) of size $d_2$ which is inside $I_2$, there exists a $\{r_1, r_2, \dots, r_{d-d_2}\} \subseteq I_1$, and a $\{e_1, e_2, \dots, e_{d_2}\} \subseteq I_2$ such that 
    $$ V \cdot \prod^{d-d_2}_{t= 1} \beta_{r_t} \prod^{d_2}_{l = 1} \beta_{e_l} > 0,$$
    and $C = \prod^{d-d_2}_{t= 1} c_{r_t}$, where $c_i = \deg(f_i)$ for $i \in I$, is the maximum in the set 
    $$ \left\{ \prod_{i \in I'} c_{i} :V \cdot \prod_{i \in I'} \beta_i \prod_{v \in J'} \beta_v >0, I' \subseteq I_1, J' \subseteq I_2, |I'| = d -d_2, |J'| = d_2 \right\}.$$
    
    We let $V'$ be an irreducible subvariety of $ V \cap \bigcap^{d-d_2}_{t = 1}\beta_{r_t}$ of dimension $d_2$ and here we abuse notation to view $\beta_{r_t}$'s as some suitable hypersurfaces in the numerical classes $p_{r_t}^*\mathcal{O}_{\P^1}(1)$'s. Then denote $d'_1 = \dim(\pi_1(V'))$. We have 
   $$ V' \cdot (\alpha_1)^{d'_1} \cdot (\alpha_2)^{d_2 - d'_1} > 0.$$
   This implies that there exists collections of indices 
   $$ \{j_1 = r_1, j_2 = r_2, \dots, j_{d-d_2} = r_{d-d_2}, j_{d -d_2 + 1}, \dots, j_{d + d'_1 - d_2}\} \subseteq I_1$$
   and 
   $$ \{u_1 ,u_2, \dots ,u_{d_2-d'_1}\} \subseteq I_2$$
   such that
  \begin{equation}\label{eq: larger-index} 
   V \cdot \prod^{d+ d'_1 - d_2}_{ t= 1} \beta_{j_t} \prod^{d_2 - d'_1}_{l = 1} \beta_{u_l} >  0.
  \end{equation}
   Notice that  
   \begin{equation} \label{eq: larger-coefficients}
       \prod^{d+ d'_1 - d_2}_{t = 1} c_{j_t} > C
   \end{equation} 
   by construction and Equation (\ref{eq: larger-index}) implies that
   $$ V \cdot \alpha^{d + d'_1 - d_2}_1 \cdot \alpha^{d_2 - d'_1}_2 > 0.$$

    Let $u_1, u_2 \in \mathbb{R}$, we have 
    \[
        (\deg(f|_V))(V \cdot (u_1\alpha_1 + u_2 \alpha_2)^d) = f_*(V)\cdot ((u_1\alpha_1 + u_2 \alpha_2)^d))
    \]
    \[  
        = V \cdot (u_1 (g_1)^*(\alpha_1) + u_2 (g_2)^*(\alpha_2))^d
    \]
    \[ 
     = V \cdot \left(u_1 \sum^k_{i = 1} c_i \beta_i + u_2 \sum^n_{i = k+1} c_i \beta_i\right)^d.
    \]

    Now we compare the coefficients of the $u^{d + d'_1 - d_2}_1u^{d_2-d'_1}_2$ and $u^{d -d_2}_1u^{d_2}_2$ terms and we obtain that for each positive integer $m$:
    \begin{align*}
        \deg(f^m|_V) \alpha_1^{d + d'_1 - d_2} \cdot \alpha_2^{d_2-d'_1} \cdot V 
      = \left(\sum^k_{i = 1} c^m_i \beta_i\right)^{d + d'_1 - d_2} \cdot \left(\sum^n_{i = k+1} c^m_i \beta_i\right)^{d_2-d'_1} \cdot V, 
    \end{align*}
    and
     \begin{align*}
         \deg(f^m|_V) \alpha_1^{d - d_2} \cdot \alpha_2^{d_2} \cdot V = \left(\sum^k_{i = 1} c^m_i \beta_i\right)^{d- d_2} \cdot \left(\sum^n_{i = k+1} c^m_i \beta_i\right)^{d_2} \cdot V.
    \end{align*}
    Therefore 
    \begin{equation}\label{eq: degf1}
         \deg(f^m|_V) = \left(\sum^k_{i = 1} c^m_i \beta_i\right)^{d + d'_1 - d_2} \cdot \alpha_2^{d_2-d'_1} \cdot V/\left(\alpha_1^{d + d'_1 - d_2} \cdot \alpha_2^{d_2-d'_1} \cdot V\right)
    \end{equation}
       and
   \begin{equation}\label{eq: degf2}
  \deg(f^m|_V)       =  \left(\sum^k_{i = 1} c^m_i \beta_i\right)^{d- d_2} \cdot \alpha_2^{d_2} \cdot V/\left(\alpha_1^{d - d_2} \cdot \alpha_2^{d_2} \cdot V\right),
   \end{equation}
since $c_i = 1$ when $i \in I_2$, 
    \[\alpha_1^{d + d'_1 - d_2} \cdot \alpha_2^{d_2- d'_1} \cdot V > 0,\]
    and
    \[\alpha_1^{d - d_2} \cdot \alpha_2^{d_2} \cdot V > 0.\]
   
   Now, by (\ref{eq: larger-coefficients}) and Equation (\ref{eq: degf1}), we have 
   $$\deg(f^m|_V) = a M^m + o(M^m) $$
   for some positive integer $a$ and $M$, such that $M > C \geq  \prod^{d - d_2}_{t = 1} c_{l_t}$ for all $\{l_1, \dots, l_{d-d_2}\} \subseteq I_1$ and $\{v_1, \dots, v_{d_2}\} \subseteq I_2$ with the property
   \[ \prod^{d -d_2}_{t=  1} \beta_{l_t} \prod^{d_2}_{s = 1}\beta_{v_s} \cdot V > 0.\]
   
   On the other hand, Equation (\ref{eq: degf2}) gives that $\deg(f^m|_V) = \mathcal{O}(C^m)$. This is a contradiction and so the result follows.
   
\end{proof}
Lemma \ref{lem:reduce-to-alldegreenontrivial} allows us to reduce to the case that all maps $f_i$ in the statement of Theorem \ref{thm: mainfull} have degree at least two when proving the result, and we now consider this restricted case.  We begin with a result in which the maps are special.
\begin{defn}
    We define the Chebyshev polynomial of degree $r$ to be the unique polynomial $T_r$ of degree $r$ such that 
    $$ T_r((x+ x^{-1})/2) =(x^r + x^{-r}) / 2 $$
    for any $x \in \mathbb{C}^*$.
\end{defn}
\begin{prop} \label{prop : toris}
     Let $K$ be a number field, let $n$ and $D$ be positive integers that are at least $2$, and let $K'$ be the union of all finite field extensions of $K$ of degree not greater than $D$ in some fixed algebraic closure of $K$.  Suppose that $f = (f_1, \dots, f_n) : (\P^1_K)^n \to (\P^1_K)^n$ is a split rational map defined over $K$ in which the $f_i$'s all have the same degree $d \ge 2$ and each $f_i$ is conjugate to either $x^{\pm d}$ or $\pm T_{d}$ for $i \in \{1, 2, \dots ,n\}$.  If $V \subset (\P^1)^n$ is an irreducible $f$-invariant hypersurface defined over $K$, then exists a non-negative integer $s_0$ such that 
     $$ (f^{-s-1}(V) \setminus f^{-s}(V))(K')  = \emptyset$$
     for all $s \geq s_0$.
\end{prop}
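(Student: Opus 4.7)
The plan is to exploit the explicit toric or Chebyshev structure of $f$. After passing to a suitable iterate of $f$ (which does not change the Preimages Question), we may assume every $f_i$ is conjugate over $\bar K$ to either $x^d$ or $T_d$. Each of these is semiconjugated to the power map $x \mapsto x^d$ on $\mathbb{G}_m$ via a finite morphism $\mathbb{G}_m \to \P^1$ (the inclusion for $x^d$; the two-to-one map $x \mapsto (x+x^{-1})/2$ for $T_d$). Combining these coordinate-wise produces a finite morphism $\phi : \mathbb{G}_m^n \to (\P^1)^n$ of degree dividing $2^n$ whose image is Zariski dense and which semiconjugates the power map $[d] : (x_i) \mapsto (x_i^d)$ on $\mathbb{G}_m^n$ to $f$. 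A $K'$-point in the image of $\phi$ thus lifts to a point on $\mathbb{G}_m^n$ defined over some field $K''$, itself a union of extensions of $K$ of degree at most $2^n D$.

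Next, lift the hypersurface: set $\tilde V := \phi^{-1}(V) \subseteq \mathbb{G}_m^n$. Invariance of $V$ under $f$ forces $\tilde V$ to be $[d]$-invariant. By the classical structure theorem for subvarieties of a torus invariant under a power endomorphism (the theorem of Laurent, or Mordell--Lang type result for $\mathbb{G}_m^n$), $\tilde V$ is a finite union of cosets of algebraic subtori by torsion points: $\tilde V = \bigcup_j \zeta_j H_j$. The preimage $[d]^{-s}(\tilde V)$ is then a finite union of cosets $\eta H_j$ where the class $[\eta] \in \mathbb{G}_m^n / H_j$ satisfies $[\eta]^{d^s} = [\zeta_j]$; in particular, each such $[\eta]$ is a torsion class whose order grows at worst like $d^s$ times the order of $[\zeta_j]$, and so grows without bound as $s\to \infty$ on any newly appearing coset.

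The key observation is that a coset $\eta H_j$ contains a $K''$-point only if the class $[\eta]$ is $K''$-rational in the quotient torus $(\mathbb{G}_m^n / H_j)(\bar K)$. Since the quotient is again a torus, its $K''$-rational torsion points are finite in number: $K''$, being a union of bounded-degree extensions of $K$, contains only finitely many roots of unity. Consequently, only finitely many cosets across the entire tower $\bigcup_s [d]^{-s}(\tilde V)$ can carry any $K''$-point. Once $s$ is large enough, every coset in $[d]^{-(s+1)}(\tilde V) \setminus [d]^{-s}(\tilde V)$ has torsion representative of order beyond the threshold allowed by $K''$, so contains no $K''$-point. Pushing down through $\phi$ then yields $(f^{-s-1}(V) \setminus f^{-s}(V))(K') = \emptyset$ for all interior $K'$-points.

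The main obstacle I anticipate is handling $K'$-points of $V$ that lie outside $\phi(\mathbb{G}_m^n)$, namely the boundary strata where some coordinate equals $0$ or $\infty$. Each such stratum is $f$-invariant, is itself isomorphic to $(\P^1)^{n-1}$ endowed with a split rational map of the same Chebyshev/monomial type, and meets $V$ in a proper invariant closed subvariety. I would handle this by induction on $n$, using the known one-dimensional case as the base: apply the inductive hypothesis to each boundary stratum of $(\P^1)^n$, and combine this with the torus argument on the open part $\phi(\mathbb{G}_m^n)$ to cover all $K'$-points of $V$ and its preimages.
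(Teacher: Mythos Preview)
Your proposal is correct and follows essentially the same approach as the paper: semiconjugate $f$ to the power map $[d]$ on $\mathbb{G}_m^n$ via a finite cover, identify the lifted invariant hypersurface as a finite union of torsion cosets of subtori, use the finiteness of roots of unity in bounded-degree extensions of $K$ to bound which cosets in the preimage tower can carry points over $K''$, and handle the boundary strata $\{x_i = 0,\infty\}$ by induction on $n$. The paper executes this more concretely---writing the invariant coset as $V(x_1^{r_1}\cdots x_n^{r_n} - \epsilon)$ and tracking the compositum of the coordinate fields (whose degree can exceed that of $K''$ by a power, a point your phrasing elides but which does not affect the argument)---but the structure is identical.
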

\begin{proof}
     We first enlarge our number field $K$ to a larger number field so that each $f_i$ can be conjugated to either $x^{\pm d}$ or to $\pm T_{d}$ by a map $v_i \in \PGL_2(K)$ and $\{i, e^{\pi i /4}\} \subseteq K$. Notice that it is enough to prove the statement in this larger field we henceforth assume that $K$ is a number field satisfying this condition.  We can then perform a change of coordinates and may assume without the loss of the generality that $f$ is of the form $f = (f_1(x_1), \dots, f_n(x_n)) = (x_1^{\pm d}, x_2^{\pm d}, \dots, x_k^{\pm d}, \pm T_{d}(x_{k+1}), \dots, \pm T_{d}(x_n) )$ for some $k\in \{0,\ldots ,n\}$. 
     
     Let $$\mu =  (x_1, \dots , x_k,  (x_{k+1} + x^{-1}_{k+1})/2, \dots, ( x_n +  x^{-1}_n)/2) : (\C^*)^n \to (\P^1)^n,$$
 and 
$$ G'  = (x_1^{\pm d}, \dots, x_k^{\pm d},  \pm x_{k+1}^d, \dots, \pm x^d_n) : (\C^*)^n \to (\C^*)^n$$
such that $\mu \circ G' = f \circ \mu$.
Then take 
$$ \mu' = (x_1, \dots, x_k, \xi_{k+1} x_{k+1}, \dots, \xi_n x_n)$$
and 
$$ G = (x^{\pm d}_1, \dots, x^{\pm d}_k, x^d_{k+1}, \dots, x^d_n),$$
where $\xi_{k+1}, \dots, \xi_n$ are some roots of unity such that 
$\mu' \circ G = G' \circ \mu'$.
Enlarge $K$ so that $\{\xi_{k+1}, \dots, \xi_n\} \subseteq K$ and abuse notation from now on to let $\mu$ be $\mu \circ \mu'$. So we have 
$$\mu \circ G = f \circ \mu.$$
Notice that further abusing notations by replacing $G$ by $G^2$ and $f$ by $f^2$, we may let

$$ G = (x_1^{d}, \dots, x_k^{ d}, x_{k+1}^d,\dots,   x^d_n)$$
and
$$ \mu \circ G = f \circ \mu. $$

Now we prove the statement by induction on the dimension $n$. If $n = 1$, then the statement is certainly true as $V$ is a just a finite set of points. Now, we suppose it is true when $n \leq n_0$ for some positive integer $n_0$ and we want to show when $n = n_0 + 1$. From now on, let $n = n_0 + 1$.
Let $W $ be the preimage of $V$ under $\mu$, which is a subvariety in $(\C^*)^n$. Since $f(V) = V$, we have 
$$ G(W) \subseteq W.$$ Thus, there exists a subvariety $W' \subseteq W$ such that $G(W') = W'$, after replacing $f$ and $G$ by a suitable iterate, we have $W \subseteq G^{-1}(W')$. Therefore, by \cite[Lemma 10]{invariant-varieties-group}, we have $W'$ is a translation of an algebraic subgroup of $(\C^*)^n$. Notice that $(\xi_ix_i + \xi^{-1}_ix^{-1}_i)/2$ maps $\{0 , \infty\}$ to $\{\infty\}$, for each $i \in \{k+1, \dots, n\}$. Thus, we have $$V' = V \setminus \mu(W) =\left(\bigcup^n_{i= 1} (V \cap H_i)\right)\cup \bigcup^k_{i = 1} (V \cap P_i),$$ where $H_i$ is the hypersurface in $(\P^1)^n$ defined by having $i$-th coordinate equals to $\infty$ and $P_i$ is the hypersurface with the $i$-th coordinate equals to $0$. We write $V' = \bigcup^{k_0}_{j = 1}V_j$ for some positive integer $k_0$, where $V_j$'s are irreducible components of $V \cap H_i$ or $V \cap P_l$ for some $i \in \{1,2, \dots, n\}$ and $l \in \{1,2, \dots, k\}$. Also, $f(V') = V'$ because $f(V) = V$, $f(V') \subseteq V'$ and $f(\mu(W)) \subseteq \mu(W)$ as power maps take $0$ or $\infty$ to themselves and Chebyshev polynomials take $\infty$ to itself. 

After replacing $f$ by some iteration we may assume each $V_j$ is invariant under $f$. Notice that every irreducible component of $V\cap H_i$ or $V \cap P_l$ has dimension at least $n-2$, if not empty, for each $i \in \{1,2, \dots, n\}$ and $l \in \{1,2, \dots, k\}$. Thus, after some reorderings of the coordinates, each $V_j$ projects onto either an irreducible hypersurface in $(\P^1)^{n-1}$ or $(\P^1)^{n-1}$ itself, denoted as $V'_j$ in both case, and projects to either $0$ or $\infty$ in the remaining $\P^1$ factor. If $V'_j = (\P^1)^{n-1}$, then its preimages under $f$ restricted to $(\P^1)^{n-1}$ is itself and there is nothing to show, so from now on we assume $V'_j$ is an irreducible hypersurface. Also, $V'_j$ is invariant under $f_j'$, the restriction of $f$ to the corresponding $(\P^1)^{n-1}$ factors. Now the induction hypothesis tells us that there exists non-negative integer $s'_j$ such that 
 $$ ((f_j')^{-s-1}(V'_j) \setminus (f_j')^{-s}(V'_j))(K')  = \emptyset$$
 for every $s \geq s'_j$. But notice that this also implies 
 $$ (f^{-s-1}(V_j) \setminus f^{-s}(V_j))(K')  = \emptyset$$
 for $s \geq s'_j$, since preimages of $\{\infty, 0\}$ under $x^d$ and preimages of $\infty$ under $\pm T_d$ are just themselves.
 Take $s'_0 = \max^{k_1}_{j = 1}s'_j$.
 
 Now we look at the preimages of $V$ under $\mu$. If this is empty, then $V$ lives completely inside some $H_i$ or $P_j$ for some $i \in \{1,2, \dots, n\}$ and $j \in \{1,2, \dots, k\}$ and, using arguments above, we go to the induction steps. From now on, we assume that $W = \mu^{-1}(V)$ is not empty. Notice that $\mu^{-1}((V \setminus V')(K')) \subseteq W(L)$, where $L$ is the union of finite field extensions of degree $D \cdot 2^n$ over $K$. Recall that $W \subseteq G^{-1}(W')$ and $G(W') = W'$. It is enough to show that there exists a non-negative integer $s_1$ such that for any $x \in (L^*)^n$, if $G^s(x) \in W'(L)$ for some $s \geq 0$, then $G^{s_1}(x) \in W'(L)$, where $L^*$ denotes the set of elements in $L$ that is not zero. As, if this $s_1$ exists and there still exists a $y \in (\P^1_{K'})^n$ and a $s > s_1 $ such that $f^{s }(y) \in \mu(W)(K') $ but $f^{s_1 }(y) \notin \mu(W)$, then for any $x \in \mu^{-1}(y) \subset (L^*)^n$, $$G^{s+1}(x) \in G(\mu^{-1}(f^{s }(x))) \subset G(W(L)) \subseteq W'(L).$$ This implies that $G^{s_1} (x) \in W'(L)$. But we have $G^{s_1}(x) \notin W$ from $f^{s_1}(y) \notin \mu(W)$. This is a contradiction. 
 
 Now, recall $W'$ is a torsion translation of an algebraic subgroup. So we have $W' = V(x^{r_1}_1\dots x^{r_n}_n - \epsilon)$ where $r_1, \dots, r_n$ are integers that are not all zero and $\epsilon$ is an torsion element such that $\epsilon^d = \epsilon$. Enlarge $K$, $K'$ and $L$ if necessary so that $\epsilon \in K$. Then $L$-points in preimages of $W'(L)$ under iterates of $G$ live in the union of $  V(x^{r_1}_1\dots x^{r_n}_n - \epsilon \lambda)$, where $\lambda$ ranges over elements in $L$ which is $d^m$-torsion for some non-negative integer $m$. Notice that if $(x_1, \dots, x_n) \in (L^*)^n$, then $x_i \in L_i$ where $L_i$ is a finite field extension of $K$ such that $[L_i :K] \leq D \cdot 2^n$ for each $i \in \{1,2, \dots, n\}$. Therefore, $\lambda \in L' = L_1L_2\dots L_n$ where $L'$ is a finite field extension of $K$ such that $[L':K] \leq (D \cdot 2^n)^n$. Therefore, $\lambda$ is a root of unity such that 
 $$[\Q(\lambda) : \Q] \leq (D \cdot 2^n)^n \cdot [K : \Q].$$ 
 
 We claim that 
$$ M =  \{ \lambda \in \overline{\Q} : \lambda^{d^m} = 1, \text{ for some } m \in \mathbb{N}^+, [\Q(\lambda) : \Q] \leq (D \cdot 2^n)^n \cdot [K : \Q] \} $$
is a finite set. This is because the set of roots of unity of bounded degree is finite, which is a special case of Northcott property, and $M$ is a subset of it. Then we can take $s_1 = \max_{\lambda \in M} \ord(\lambda)$, where $\ord(\lambda)$ is the minimum non-negative integer $m$ such that $\lambda^{d^m} = 1$. Then for any $y \in (L^*)^n$ such that $G^s(y) \in W'(L)$ for some $s > s_1$, we have $y \in V(x^{r_1}_1\dots x^{r_n}_n - \epsilon \lambda)$ for some $\lambda \in M$ and thus $G^{s_1}(y) \in W'(L)$. This will conclude the proof in this case.  

To conclude, we take $s_0 = \max(s'_0, s_1)$. We have 
 $$ (f^{-s-1}(V) \setminus f^{-s}(V))(K')  = \emptyset$$
 for any $s \geq s_0$.
\end{proof}
\begin{prop}\label{prop: Lattes}
    Let $K$ be a number field, let $n$ and $D$ be positive integers that are at least $2$, and let $K'$ be the union of all finite field extensions of $K$ of degree not greater than $D$ in some fixed algebraic closure of $K$.  Suppose that $f = (f_1, \dots, f_n) : (\P^1_K)^n \to (\P^1_K)^n$ is a split rational map defined over $K$ in which the $f_i$'s all have the same degree $d \ge 2$ and each $f_i$ is a Latt\`es map for $i \in \{1,2, \dots, n\}$.  If $V \subset (\P^1)^n$ is an irreducible $f$-invariant hypersurface defined over $K$ that projects dominantly onto any subset of $n-1$ coordinate axes, then exists a non-negative integer $s_0$ such that 
     $$ (f^{-s-1}(V) \setminus f^{-s}(V))(K')  = \emptyset$$
     for all $s \geq s_0$.
\end{prop}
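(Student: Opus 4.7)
The plan is to adapt the strategy of Proposition \ref{prop : toris} to the Latt\`es setting, replacing the toric cover $\mu : (\C^*)^n \to (\P^1)^n$ with a finite cover $\pi : A \to (\P^1)^n$ coming from a product of elliptic curves, and using a structural theorem for invariant subvarieties of abelian varieties in place of \cite[Lemma 10]{invariant-varieties-group}. After enlarging $K$ by a finite extension, each Latt\`es map $f_i$ fits into a commutative square with an elliptic curve $E_i / K$, a degree-$2$ quotient $\pi_i : E_i \to \P^1$, and a lift $\phi_i : E_i \to E_i$ of the form $P \mapsto \psi_i(P) + T_i$ with $\psi_i \in \End(E_i)$ and $T_i$ a torsion point, such that $\pi_i \circ \phi_i = f_i \circ \pi_i$. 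Setting $A = E_1 \times \cdots \times E_n$, $\pi = \pi_1 \times \cdots \times \pi_n$, and $\tilde{f} = (\phi_1, \dots, \phi_n)$, one has $\pi \circ \tilde{f} = f \circ \pi$ with $\pi$ finite surjective of degree $2^n$. Replacing $f$ by a suitable iterate and enlarging $K$ finitely further, the torsion-translate parts $T_i$ can be absorbed, and I may assume each $\phi_i = \psi_i$ is an isogeny, so $\tilde{f}$ becomes an isogeny of $A$.

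Next, I would study $W = \pi^{-1}(V) \subseteq A$, a pure $(n-1)$-dimensional subvariety. Since $\pi$ is finite surjective and $f(V) = V$, we have $\tilde{f}(W) = W$, and after a further iterate we may assume $\tilde{f}$ fixes each irreducible component of $W$. Fix an irreducible component $W'$ with $\tilde{f}(W') = W'$. By the classical structure theorem for isogeny-invariant subvarieties of abelian varieties (the abelian analog of \cite[Lemma 10]{invariant-varieties-group}, in the spirit of Raynaud and Bogomolov), $W' = B + \tau_0$ is a torsion translate of an abelian subvariety $B \subseteq A$. The hypothesis that $V$ projects dominantly onto every $(n-1)$-coordinate subset of $(\P^1)^n$ forces $B \subsetneq A$, and moreover, because $\pi$ is already surjective, we do not need the boundary-induction step of the sort appearing in Proposition \ref{prop : toris}.

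Now $\tilde{f}$ descends to an isogeny $\bar{f} : \bar{A} \to \bar{A}$ on the quotient $\bar{A} = A / B$, and $W'$ collapses to a single $\bar{f}$-fixed torsion point $\bar{\tau}_0$, with $\tilde{f}^{-m}(W')$ equal to the $\pi_B$-preimage of $\bar{f}^{-m}(\bar{\tau}_0)$. Given $y \in V(K')$, pick a lift $x \in A(L)$ with $[L : K] \leq 2^n D$; then $\tilde{f}^{s+1}(x) \in W'$ is equivalent to $\bar{f}^{s+1}(\bar{x}) = \bar{\tau}_0$ in $\bar{A}(L)$. Since $\bar{\tau}_0$ is torsion and $\bar{f}$ is an isogeny, $\bar{x}$ is itself torsion. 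Applying the Northcott property on $\bar{A}$ (torsion points have height zero, so the set of torsion points of $\bar{A}$ defined over extensions of $K$ of degree at most $2^n D$ is finite; here \cite[Theorem 2.3]{DynamicalC} enters, by way of the coordinate factors of $\bar{A}$) yields a uniform bound $s_0$ such that $\bar{f}^{s_0}(\bar{x}) = \bar{\tau}_0$, hence $f^{s_0}(y) \in V$, giving the proposition.

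The principal hurdle I anticipate is the abelian structural theorem identifying $W'$ as a torsion translate of an abelian subvariety --- the abelian analog of \cite[Lemma 10]{invariant-varieties-group} used in the toric case --- since $\tilde{f}$ only becomes a pure isogeny after passing to an iterate, and one must verify that $W'$ remains invariant (not merely preperiodic) under that isogeny in order for the Raynaud--Bogomolov-type conclusion to be available. A secondary, more routine, hurdle is the bookkeeping of the bounded-degree extension $L / K$ through which lifts pass, which is handled uniformly via $\deg(\pi) = 2^n$ so that the Northcott step produces a bound $s_0$ depending only on $(K, D, n, A)$ and not on $y$.
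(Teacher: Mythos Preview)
Your overall architecture coincides with the paper's: lift through $\pi:A=\prod_i E_i\to(\P^1)^n$, show via Manin--Mumford that the invariant irreducible pieces of $\pi^{-1}(V)$ are torsion cosets of abelian subvarieties, and deduce stabilisation over bounded-degree extensions. A few inaccuracies to correct. First, general Latt\`es quotients $\pi_i$ have degree at most $6$, not always $2$ (this is exactly why the paper carries the bound $6^nD$), so replace $2^nD$ by $6^nD$ throughout. Second, from $f(V)=V$ and $\pi\circ\tilde f=f\circ\pi$ you only obtain $\tilde f(W)\subseteq W$, not equality; as in the paper you must first pass to the eventual image $W'\subseteq W$ with $\tilde f(W')=W'$ and then argue componentwise. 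Third, iteration alone does not absorb the translation part of $\phi_i(P)=\psi_i(P)+T_i$; the clean way is to translate $E_i$ by a fixed point of $\phi_i$ (one exists since $\psi_i-\mathrm{id}$ is a nonzero isogeny, hence surjective). The paper avoids this by keeping $G$ as is and instead translating $W$ to a $\tilde G$-invariant $\tilde W$ for the homomorphism part $\tilde G$.

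Your endgame genuinely differs. Having identified $W'=B+\tau_0$, the paper simply observes that $G|_{W'}$ is \'etale and invokes \cite[Theorem~2.3]{DynamicalC} as a black box. You instead pass to $\bar A=A/B$, collapse $W'$ to a fixed torsion point $\bar\tau_0$, and finish via finiteness of torsion points of $\bar A$ over extensions of degree $\le 6^nD$. That route is valid and arguably more transparent, since it makes the Northcott input explicit; note however that your parenthetical crediting \cite[Theorem~2.3]{DynamicalC} ``by way of the coordinate factors of $\bar A$'' is misplaced---Northcott on $\bar A$ suffices on its own, and $\bar A=A/B$ need not decompose into elliptic factors. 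Either conclusion step works, and both rest on the same structural input (Manin--Mumford) that you correctly flagged as the principal hurdle.
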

\begin{proof}
     In this case, $f = (f_1, f_2, \dots, f_n)$ where each $f_i$'s is a Latt\`es map. We consider a diagram:
\begin{equation}
    \begin{tikzcd}
E_1 \times E_2 \times \dots \times E_n \arrow{r}{G} \arrow[swap]{d}{ \pi} & E_1 \times E_2 \times \dots \times E_n \arrow{d}{  \pi} \\
(\P^1)^n \arrow{r}{f} & (\P^1)^n
\end{tikzcd}
\end{equation}
where $G = (g_1, g_2, \dots, g_n)$ is also a split morphism on the abelian variety $E_1 \times E_2 \times \dots \times E_n$ and $\pi = (\pi_1, \pi_2, \dots, \pi_n)$ is a projection map such that each $g_i$ and $\pi_i$ satisfies $f_i \circ \pi_i = \pi_i \circ g_i$ by the fact that $f_i$ is a Latt\`es map. Let $W = \pi^{-1}(V)$, which is a subvariety in $E_1 \times E_2 \times \dots \times E_n$. Since each $\pi_i$ has degree bounded by $6$ \cite[Proposition 6.37]{book-arithmetic-ds} and we can enlarge $K$ to a larger number field so that $g_i$'s are defined over $K$, we replace $K$ by this larger number field. It is enough to show that $L$-points of $W$ are stabilized under preimages of $G$, for every finite field extension $L$ of $K$ of degree not greater than $6^n \cdot D$. We have $G(W) \subseteq W$ by construction and there exists a subvariety $W' \subseteq W$ such that, after replacing $G$ and $f$ by some suitable iterate, $G(W') = W'$ and $G(W) \subseteq W'$. We just need to show that there exists a non-negative integer $s_0$ such that for any $s \geq s_0$ and $[L : K] \leq 6^n\cdot D$,
\begin{equation}\label{eq: reduce-to-irred}
    (G^{-s-1}(W') \setminus G^{-s}(W'))(L) = \emptyset.
\end{equation}
 This will imply that 
\begin{equation}
    (f^{-s-1}(V) \setminus f^{-s}(V))(K) = \emptyset,
\end{equation}
since, if not, there exists $x \in (\P^1_K)^n$ such that $f^s(x) \in V(K)$ but $f^{s_0}(x) \notin V(K)$ for some $s > s_0$. Thus there exists a $$y \in \pi^{-1}(x) \subset (E_1 \times E_2 \times \dots \times E_n)(L),$$ where $L$ is a finite field extension of $K$ of degree not greater than $6^n\cdot D$, such that $G^{s+1}(y) \in W'(L)$, but $G^{s_0}(y) \notin W$ so is not in $W'$. This is a contradiction to (\ref{eq: reduce-to-irred}).

Notice that it is enough to prove \ref{eq: reduce-to-irred} for each irreducible component of $W'$. From now on we abuse notation and let $W$ denote an arbitrary irreducible component of $W'$. To prove the statement (\ref{eq: reduce-to-irred}), we first consider $\Tilde{W}$ which is a translation of $W$ by some $\Bar{K}$-point in $E_1 \times E_2 \times \dots \times E_n$ such that $\Tilde{W}$ is invariant under $\Tilde{G}$, where $\Tilde{G}$ is a group homomorphism of the abelian variety with the property that $G$ is the composition of $\Tilde{G}$ with a suitable translation. 

Then, by \cite[Theorem 3.1]{invariant-varieties-abelian}, $\Tilde{W}$ contains a Zariski dense set of preperiodic points of $\Tilde{G}$.
Note that the preperiodic points of $\Tilde{G}$ are torsion points in $$E_1 \times E_2 \times \dots \times E_n$$ (see \cite[Claim 3.2]{invariant-varieties-abelian} and notice that $\Tilde{G}$, $G$ and $f$ are polarizable). Therefore, we can apply the classical Manin-Mumford conjecture (proved in \cite{classicalMMconjecture}) to get that $\Tilde{W}$ is a translation of some subabelian variety of $E_1 \times E_2 \times \dots \times E_n$ and so is $W$. Thus $G$ restricted to $W$ is an \'etale morphism and therefore we apply \cite[Theorem 2.3]{DynamicalC} to conclude that there exists a nonnegative integer $s_0$ such that for any $s \geq s_0$ we have
\begin{equation}
    (G^{-s-1}(W) \setminus G^{-s}(W))(L) = \emptyset,
\end{equation}
for any finite field extension $L$ of $K$ of degree not greater than $6^n \cdot D$.
\end{proof}
\begin{lem}\label{lem: equal-degree}
    Let $V$ be an irreducible hypersurface in $(\P^1)^n$ that projects dominantly onto every subset of $n-1$ coordinate axes. Let $f = (f_1, \dots, f_n)$ be a split rational map with $\deg(f_i) > 1$ for all $1 \leq i \leq n$.  If $f(V) = V$ then $\deg(f_1) = \deg(f_2) = \dots = \deg(f_n)$.
\end{lem}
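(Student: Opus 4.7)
The plan is a pure intersection-theoretic computation on $(\P^1)^n$: I will compute $\deg(f|_V)$ in two ways by intersecting $V$ with a variable class, and then read off all the $c_i = \deg(f_i)$ from the resulting polynomial identity.

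Set $\beta_i = p_i^*\mathcal{O}_{\P^1}(1)$ as in Lemma \ref{lem:reduce-to-alldegreenontrivial}, so that $f^*\beta_i = c_i\beta_i$. Since $\beta_i^2 = 0$ on $(\P^1)^n$ and $V$ is a divisor, its numerical class is $[V] = \sum_j d_j\beta_j$, where $d_j := V\cdot\prod_{i\neq j}\beta_i$ equals the degree of the projection $V \to (\P^1)^{n-1}$ obtained by omitting the $j$-th coordinate; by the dominance hypothesis, $d_j > 0$ for every $j$. Introducing the parametric class $\alpha(u) = \sum_i u_i\beta_i$ and using $\beta_i^2 = 0$, only squarefree monomials survive in $\alpha(u)^{n-1}$, so
\begin{equation*}
\alpha(u)^{n-1}\cdot V = (n-1)!\sum_{j=1}^n d_j\prod_{i\neq j}u_i,
\end{equation*}
and similarly, since $f^*\alpha(u) = \sum_i c_i u_i\beta_i$,
\begin{equation*}
(f^*\alpha(u))^{n-1}\cdot V = (n-1)!\sum_{j=1}^n d_j\Bigl(\prod_{i\neq j}c_i\Bigr)\prod_{i\neq j}u_i.
\end{equation*}

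On the other hand, $f(V)=V$ together with the irreducibility of $V$ implies that $f|_V : V \to V$ is generically finite of some degree $e := \deg(f|_V)$. Iterating the projection formula $f_*(f^*\alpha\cdot\beta) = \alpha\cdot f_*\beta$ with $\beta = (f^*\alpha)^{k}\cdot[V]$ for $k=0,1,\ldots,n-2$ and using $f_*[V] = e[V]$ yields the integer identity $(f^*\alpha)^{n-1}\cdot V = e\cdot\alpha^{n-1}\cdot V$, valid for every class $\alpha$. Substituting the two formulas above and comparing the coefficients of $\prod_{i\neq j}u_i$ for each $j$ gives $d_j\prod_{i\neq j}c_i = e\cdot d_j$; since $d_j > 0$ this forces $\prod_{i\neq j}c_i = e$ independent of $j$, and dividing the equations for $j$ and $j'$ yields $c_{j'} = c_j$, so all the $c_i$ coincide.

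I do not anticipate a significant obstacle; the argument is a direct intersection-number calculation in the same spirit as the degree comparison in Lemma \ref{lem:reduce-to-alldegreenontrivial}. The one mildly delicate point is noting that the projection-formula identity $(f^*\alpha)^{n-1}\cdot V = e\cdot\alpha^{n-1}\cdot V$ holds for arbitrary classes $\alpha$, not just ample ones, which is precisely what lets us extract $n$ independent equations by matching coefficients of $\prod_{i\neq j}u_i$ simultaneously.
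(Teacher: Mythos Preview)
Your proof is correct and takes a genuinely different route from the paper's. The paper argues by reduction to the surface case, which it imports from \cite[Theorem~4.1]{Invariant-curve-rational}: for $n>2$ it fixes periodic points $(\alpha_3,\dots,\alpha_n)$ of $(f_3,\dots,f_n)$, shows via a rank argument on the coefficient matrix of a defining polynomial that for some such tuple the specialized curve in $(\P^1)^2$ has an irreducible component projecting dominantly onto both factors, and then applies the cited $n=2$ result to that component to get $\deg(f_1)=\deg(f_2)$. Your argument, by contrast, is self-contained and uniform in $n$: you only use the projection formula $f_*[V]=e[V]$ together with $d_j>0$, and the identity $\prod_{i\neq j}c_i=e$ falls out directly from comparing coefficients in the parametric class $\alpha(u)$. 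This is shorter and avoids both the external citation and the specialization machinery; the paper's approach is more hands-on geometrically, and its trick of restricting to fibers over periodic points is a technique that could survive in settings where the intersection ring is not as explicit as on $(\P^1)^n$.
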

\begin{proof}
   First of all, if $n = 2$ we see that the statement is true by \cite[Theorem 4.1]{Invariant-curve-rational}. Now, we assume $n > 2$ and we just need to show that for any $i,j \in \{1, 2, \dots , n\}$ and $i \neq j$, we have $\deg(f_i) = \deg(f_j)$. Without loss of generality, we assume $i = 1$ and $j = 2$. Notice that since $V$ projects dominantly onto every subset of $n-2$ coordinate axes and cooridinate projection maps are closed, if $\pi'$ is projection onto the last $(n-2)$ coordinates, then we have $\pi'(V) = (\P^1)^{n-2}$, as $\pi'(V)$ is Zariski dense and closed in $(\P^1)^{n-2}$. 
   
   Thus $\prod^n_{i = 3} \Per(f_i) \subseteq \pi'(V)$. We take $U = \prod^n_{i = 3} \Per(f_i)\cap (\C^*)^{n-2}$, which is also a Zariski dense set in $(\P^1)^{n-2}$, where ${\rm Per}(h)$ denotes the set of periodic points of a map $h$.
    
   We note that $V \cap (\C^*)^n$ is the zero set of a 
   polynomial $h(x_1, \dots, x_n)$ that for each $i$ has a non-trivial term 
   involving the variable $x_i$. We wish to show that 
   there exists a $\alpha \in U$ such that $h(x_1, x_2, 
   \alpha_3, \dots, \alpha_n)$ cannot be written as 
   $h_1(x_1)h_2(x_2)$ for some polynomials $h_1$, $h_2$. 
   Notice that we can write 
   $$h(x_1, x_2, \dots, x_n) = \sum^m_{k = 0} 
   \gamma_k(x_2) x^k_1,$$ 
   for some positive integer $m$ and coefficients
   $\gamma_k(x_2) \in \C[x_3, \dots, x_n][x_2]$ for each $k$. 
   Notice that the degree of $\gamma_k$ is bounded and we let $l$ be a bound on this degree. 
   
   Then we construct a $k \times l$ 
   matrix $M = (\beta_{i,j})_{0 \leq i \leq k, 0 \leq j \leq l}$, 
   where for each $i,j$, $\beta_{i,j} \in \C[x_3, \dots , x_n]$ is the 
   coefficients of $x_2^j$ in $\gamma_i$. 
   
   Notice that for every $
   (\alpha_3, \dots, \alpha_n) \in (\C^*)^n$, $h(x_1, x_2, 
   \alpha_3, \dots, \alpha_n)$ can be written as 
   $h_1(x_1)h_2(x_2)$ if and only if $M(\alpha_3, \dots ,
   \alpha_n)$ has rank not greater than $1$, which is 
   equivalent to each $2 \times 2$ minor in the matrix $M$ having trivial determinant. So suppose that for every $(\alpha_3, \dots, \alpha_n) \in U$, the $2 \times 2$ minors of $M(\alpha_3, \dots, \alpha_n)$ all have trivial determinant. 
   
   Then the $2 \times 2$ minors of $M(x_3, \dots, x_n)$ are identically zero as they vanish on the Zariski dense set $U$ in $(\P^1)^{n-2}$. This them implies that $h(x_1, x_2, \dots, x_n) = h_1(x_1)h_2(x_2)$ for some $h_1, h_2 \in \C[x_3, \dots , x_n]$, which contradicts the assumption that $V$ is irreducible. 
   
   Therefore there exists $(\alpha_3, \dots, \alpha_n) \in U$ such that $h(x_1, x_2, \alpha_3, \dots, \alpha_n)$ cannot be written as product of two single variable polynomials and thus it has an irreducible component $\Tilde{h}(x_1,x_2)$ such that $C = V(\Tilde{h}(x_1,x_2))$ is an irreducible curve projecting dominantly onto each coordinate in $(\P^1)^2$. 
   
  Notice that there exists a positive integer $d$ such that $f^d_i(\alpha_i) = \alpha_i$ for each $3 \leq i \leq n$. Thus $(f^d_1, f^d_2)(V(h(x_1, x_2, \alpha_3, \dots, \alpha_n))) = V(h(x_1, x_2, \alpha_3, \dots, \alpha_n))$ and if we replace $d$ by some larger integer, we have $(f_1^d, f_2^d)(C) = C$. Now since the statement is known for $n = 2$ we have that $\deg(f_1) = \deg(f_2)$, and so it now follows that $\deg(f_1) = \deg(f_2) = \dots = \deg(f_n)$.
   %
    
    
\end{proof}
\begin{lem} \label{lem: red-to-surface} 
    In the case where the degrees of $f_1,\ldots ,f_n$ are all at least two, is enough to prove Theorem \ref{thm: mainfull} with the additional assumption that $V$ is an irreducible hypersurface of dimension not less than $1$ that projects dominantly onto every subset of $n-1$ coordinate axes in $(\P^1)^n$.
\end{lem}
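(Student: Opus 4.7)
The plan is to proceed by induction on $n$, assuming Theorem~\ref{thm: mainfull} has been established at every level $n' < n$ and in the target case at level $n$. Write $V = V_1 \cup \cdots \cup V_r$ as a union of irreducible components; since $f(V) = V$ permutes the components, replacing $f$ by $f^{r!}$ makes each component individually $f$-invariant, and stability of $(f^{-s}(V))(K)$ reduces to stability for each component. So I may assume $V$ is irreducible, and since the statement is trivial for $V = (\P^1)^n$, also that $V \subsetneq (\P^1)^n$.

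Next I analyse the $n$ coordinate projections $\pi_i \colon (\P^1)^n \to (\P^1)^{n-1}$ omitting the $i$-th factor. Writing $g_i = (f_1, \ldots, \widehat{f_i}, \ldots, f_n)$, the split nature of $f$ gives $\pi_i \circ f = g_i \circ \pi_i$, so each $V_i := \pi_i(V)$ is an irreducible $g_i$-invariant subvariety of $(\P^1)^{n-1}$. If every $V_i = (\P^1)^{n-1}$, then $\dim V \geq n - 1$; together with $V \neq (\P^1)^n$ this forces $V$ to be an irreducible hypersurface of dimension $n-1 \geq 1$ projecting dominantly onto every subset of $n-1$ coordinate axes, i.e., the target case. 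Otherwise some $V_i \subsetneq (\P^1)^{n-1}$, and $W_i := \pi_i^{-1}(V_i) = V_i \times \P^1_i$ is an $f$-invariant proper subvariety of $(\P^1)^n$ containing $V$. In the subcase $V = W_i$, preimages factor as $f^{-s}(V) = g_i^{-s}(V_i) \times \P^1_i$, so the inductive hypothesis applied to $V_i \subsetneq (\P^1)^{n-1}$ directly yields the conclusion for $V$.

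The crux is the remaining subcase $V \subsetneq W_i$, in which $V$ has codimension at least $2$ in $(\P^1)^n$. The plan is to express $V$ as a finite intersection of $f$-invariant hypersurfaces: for each subset $J \subseteq \{1, \ldots, n\}$ such that $\pi_J(V)$ is a hypersurface in $(\P^1)^{|J|}$, the cylinder $H_J := \pi_J^{-1}(\pi_J(V))$ is an $f$-invariant hypersurface in $(\P^1)^n$ containing $V$; I expect that finitely many such cylinders suffice to cut out $V$ set-theoretically. Each such $H_J$ is a hypersurface, hence the case analysis of the previous paragraph applies to it: either it is already the target case, or it is of the form $V' \times \P^1$ and the preimages question reduces via induction on $n$. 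Thus each tower $(f^{-s}(H_J)(K))_s$ stabilizes, and since $f^{-s}(V)(K) = \bigcap_J f^{-s}(H_J)(K)$, the tower for $V$ stabilizes too. The main technical obstacle is the purely geometric claim that the projection cylinders $H_J$ cut out any irreducible $f$-invariant subvariety of $(\P^1)^n$; I would attempt this by a secondary induction on the codimension of $V$, exploiting irreducibility of $V$ and the split structure of $f$.
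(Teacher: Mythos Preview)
Your overall architecture—pass to irreducible $V$, build projection cylinders $H_J$, and reduce to hypersurfaces—is exactly the paper's strategy. The genuine gap is the step you yourself flag: you need the cylinders to cut out $V$ \emph{set-theoretically}, so that $f^{-s}(V)(K)=\bigcap_J f^{-s}(H_J)(K)$. In general $\bigcap_J H_J$ only contains $V$ as one irreducible component; there can be extra components (even of the same dimension as $V$), and then your displayed equality fails. Your proposed secondary induction on codimension is aimed at proving that the extra components do not occur, but no argument is given, and it is not clear that irreducibility together with $f$-invariance forces this.

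The paper sidesteps the issue entirely by organizing the induction differently. It inducts on $\dim V$ rather than on $n$. Having chosen $D=\dim V$ coordinates onto which $V$ projects dominantly, it takes only the $n-D$ cylinders $H_j=\pi_j^{-1}(\pi_j(V))$ with $\pi_j$ the projection onto those $D$ coordinates together with one extra coordinate $j$, and shows merely that $V$ is a \emph{component} of $\bigcap_j H_j$ (an easy fiber-dimension count). Stabilization for each $H_j$ then forces any $K$-point eventually landing in $V$ to land in $\bigcap_j H_j$; if it lands in some other component $V_i$, then in fact it lands in $V\cap V_i$, which is $f$-invariant of dimension strictly less than $D$, and the inductive hypothesis on dimension finishes the job. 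So the ``extra components'' are absorbed by the dimension induction rather than excluded by a geometric claim. Your secondary induction on codimension is morally the same induction, but you should apply it directly to the preimages statement (as the paper does) rather than to the stronger and unnecessary assertion that the cylinders cut out $V$ exactly.
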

\begin{proof}
    The idea is the same as in \cite[Proposition 2.1]{invariant-varieties-splitting}. We assume throughout this proof that we have the hypotheses of Theorem \ref{thm: mainfull} and that the maps $f_1,\ldots ,f_n$ all have degree $\ge 2$.

    We use induction on the dimension of $V$. If $\dim(V) = 0$, then $V$ is a finite set of points and Theorem \ref{thm: mainfull} is true since the preimages of $K$-points in $V$ live inside a set of $K$-points of bounded height. Now suppose that the conclusion to Theorem $\ref{thm: mainfull}$ holds when $\dim(V) <D$, we prove the case that $\dim(V) = D$. Notice that it is enough to prove that the conclusion to Theorem \ref{thm: mainfull} holds for each irreducible components of $V$ separately after replacing $f$ by a suitable iterate. 
    
    Thus we assume that $V$ is irreducible. Then there exist $D$ coordinate axes such that the projection $\pi$ of $V$ onto these axes is dominant. 
    
    Without loss of generality, we assume that they are the first $D$ coordinates and for $j>D$ we let $\pi_j$ denote the projection from $V$ to the coordinate axes indexed by $\{1,\ldots , D,j\}$.
    
     Therefore $\pi_j(V)$ is a hypersurface in $(\P^1)^{D+1}$ and also 
     $$H_j := \pi_j(V) \times (\P^1)^{n - D  - 1}$$ is a hypersurface in $(\P^1)^n$. Now we claim that $V $ is a component of $\bigcap^n_{j = D + 1} H_j$. Notice that $\dim(\bigcap^n_{j = D + 1} H_j) \geq D$ and $V \subset \bigcap^n_{j = D + 1} H_j$. So we just need to show that $\dim(\bigcap^n_{j = D + 1} H_j) = D$. 
     
     Since $\pi_j(V)$ projects dominantly onto the first $D$ coordinate axes, there exists a Zariski open subset $U \subset (\P^1)^{D}$ such that for each 
     $$\alpha = (\alpha_1, \dots , \alpha_D) \in U$$ there exists a finite set $S_{\alpha, j}$ such that if $(\alpha_1, \dots, \alpha_n) \in H_j$ and $(\alpha_1, \dots, \alpha_{D}) \in \pi(V)$ then $\alpha_j \in S_{\alpha, j}$. This is saying for each $\alpha \in U$ there are only finitely many points in $\bigcap^n_{j = D + 1} H_j$ such that the first $D$ coordinates are equal to $\alpha$. Therefore, $\dim(\bigcap^n_{j = D + 1} H_j) = D$ and $V$ is a component of $\bigcap^n_{j = D + 1} H_j$. Notice that since $f(V) = V$, we also have $f(H_j) = H_j$.
    
    We claim that if for each $H_j$, we have the statement of Theorem \ref{thm: mainfull} holds, then certainly it holds for $V$. To prove the claim, we first notice that there exists a subvarity $H \subseteq \bigcap^n_{j = D + 1} H_j$ containing $V$ such that $f(H) = H$ and $f(\bigcap^n_{j = D + 1} H_j) = H$ if we replace $f$ by some iterate. We denote $V_1, \dots, V_k $ as the irreducible components of $H$ of dimension $D$ and without loss of generality assume that $V = V_1$ and $f(V_i) = V_i$ for each $i \in \{1, \dots ,k\}$ by replacing $f$ with suitable iterates. Assume that the conclusion to Theorem \ref{thm: mainfull} holds for each $H_j$. Then there exist non-negative integers $s_j$ such that each $x \in (\P^1_K)^n$ satisfies $f^s(x) \in H_j(K)$ for some non-negative integer $s$, and we have $f^{s_j}(x) \in H_j(K)$ for each $D+1 \leq j \leq n$.

    Now for each $x \in (\P^1_K)^n$ such that $f^s(x) \in V(K)$ for some non-negative integer $s$, we have certainly $f^s(x) \in H_j(K)$ for each $D+ 1 \leq j \leq n$. Thus letting $s'_0$ denote the quantity $\max_{D+ 1 \leq j \leq n}\{s_j\}$, we have $f^{s'_0}(x) \in \bigcap^n_{j = D+1} H_j(K)$ and $f^{s'_0 + 1}(x) \in H$. If $f^{s'_0 + 1}(x) \notin V$ then $f^{s'_0 + 1}(x) \in V_i$  for some $i \in \{2, \dots ,k\}$ such that $f^s(x) \in V \cap V_i$. Since $f(V) = V$ and $f(V_i) = V_i$, we have $f(V\cap V_i) \subseteq (V \cap V_i)$ and there exists $V' \subseteq V \cap V_i$ such that $f(V \cap V_i) = V'$ and $f(V') = V'$ after further replacing $f$ by a suitable iterate. Notice that $\dim(V') < D$ and $x$ is in the preimages of $V'$ under $f$. By the induction hypothesis, there exists a positive integer $s'_1$ such that $ (f^{-s'_1-1}(V') \setminus f^{-s'_1}(V))(K) = \emptyset$. Therefore, taking $s_0 = \max\{s'_1, s'_0 + 1\}$, we have $f^{s_0}(x) \in V(K)$. We proved that it is enough to prove that the conclusion to Theorem \ref{thm: mainfull} holds for each $H_j$.
    
    Now for each $H_j$, it is equivalent to prove the statement of Theorem \ref{thm: mainfull} for $\pi_j(V) \subset (\P^1)^{D + 1}$ which is an irreducible hypersurface invariant under $f' = (f_1, \dots ,f_D)$ and projecting dominantly onto any subset of $D$ coordinate axes.
\end{proof}
\begin{prop}\label{prop: cancellation=curve}
Let $K$ be a finitely generated field extension of $\Q$, $D$ a positive integer and let $L$ be the union of finite extensions of $K$ of degree less or equal to $D$. Let $f : \P^1_K \to \P^1_K$ be a surjective morphism of degree greater than $1$ defined over $K$. Then there exists a positive integer $N$ such that if $a ,b \in \P^1_L$ satisfies $f^n(a) = f^n(b)$ for some $n \geq 0$, then $f^N(a) = f^N(b)$.
\end{prop}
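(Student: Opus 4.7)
The plan is to establish this as the bounded-degree-extension version of the dynamical cancellation theorem for curves of Bell, Matsuzawa, and Satriano \cite[Theorem 1.3]{DynamicalC}, which handles the case $L = K$. I would follow the $p$-adic strategy used there and verify that the bound it produces is uniform across bounded-degree extensions. First, I would reformulate the problem: letting $F = (f, f) : \P^1 \times \P^1 \to \P^1 \times \P^1$ and $\Delta \subset \P^1 \times \P^1$ the diagonal, the claim is equivalent to stabilization of the ascending chain $\Delta(L) \subseteq F^{-1}(\Delta)(L) \subseteq F^{-2}(\Delta)(L) \subseteq \cdots$ after $N$ steps. Next, I would fix a non-archimedean place $v$ of $K$ at which $f$ has good reduction and embed $L$ into $\C_v$, the completion of an algebraic closure of $K_v$. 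Since each element of $L$ lies in an extension of $K_v$ of degree at most $D$, the image of $L$ sits inside a controlled subset of $\C_v$.

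The main step is to invoke Rivera-Letelier's theory of $p$-adic dynamics on $\P^1(\C_v)$, as in \cite{Uniformization, DynamicalC}. On Fatou components, after replacing $f$ by a suitable iterate, $f$ is analytically conjugate to a standard model (attracting-basin linearization, or a monomial / Chebyshev / Latt\`es-type normal form), so one can describe the fibers of $f^n$ explicitly. In these coordinates, the condition $f^n(a) = f^n(b)$ typically translates into $a = \zeta\, b$ (or an analogous finite-group translate of $b$) for $\zeta$ a root of unity of $d$-power order, where $d = \deg f$. Because the set of roots of unity of bounded degree over $\Q$ is finite, a Northcott-type property invoked in the proof of Proposition \ref{prop : toris}, this forces cancellation to occur after a bounded number of steps $N$ depending only on $f$, $K$, and $D$. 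Points lying in the Julia set (where no Rivera-Letelier linearization is available) are handled by a complementary Northcott argument: such pairs $(a, b)$ are preperiodic, hence of bounded height in every bounded-degree extension of $K$, so the corresponding contribution to the ascending chain is finite.

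The main obstacle is verifying that the bound $N$ is genuinely uniform over all bounded-degree extensions of $K$, rather than growing with $[K(a, b) : K]$. This requires checking that the Rivera-Letelier decomposition of $\P^1(\C_v)$ and its attendant analytic estimates depend only on intrinsic data of the $p$-adic dynamics of $f$, not on the algebraic field of definition of the particular points $a$ and $b$ under consideration. This should follow from the analytic-geometric nature of the uniformization combined with the degree bound $D$, but unpacking the proof of \cite[Theorem 1.3]{DynamicalC} to confirm this uniformity constitutes the technical heart of the argument.
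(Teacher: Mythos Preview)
Your approach matches the paper's: both reduce to re-running the proof of \cite[Theorem~3.1]{DynamicalC} after embedding $K$ (not $L$) into $\Q_p$ for a prime of good reduction, so that $L$ lands in the union of extensions of $\Q_p$ of degree at most $D$ inside $\C_p$; and both identify the finiteness of roots of unity of bounded degree over $\Q_p$ (cf.\ \cite[Proposition~3.6(3)]{DynamicalC}) as the reason the bound $N$ is uniform over all such extensions.

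The one inaccuracy in your sketch is the treatment of Julia-set points: these are \emph{not} in general preperiodic, so the Northcott argument you propose for them does not work as stated. In the actual argument of \cite{DynamicalC}, the choice of a prime where $f$ has good reduction (so that, after conjugation, $f\in\Z_p\llbracket z\rrbracket$) guarantees that all the relevant $L$-points lie in the quasi-periodicity domain, where Rivera-Letelier's uniformization applies directly; no separate Julia-set case is needed. Once you unpack \cite[Theorem~3.1]{DynamicalC} as you intend, this will become apparent.
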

\begin{proof}
    \cite[Theorem 3.1]{DynamicalC} proved this for $L$ a number field. But the proof still works if we have $L$ as above. The only changes to the proof that are needed is that instead of embedding $L$ into $\Q_p$ for a suitable prime $p$ such that, after embedding, $f \in \Z_p[[x]]$, we embed $K$ into $\Q_p$ for a suitable prime $p$ such that $f \in \Z_p[[x]]$. Then $L$ is naturally embedded in the union of finite field extensions of $\Q_p$ of degree less or equal to $D$ which is inside $\C_p$. Notice that the key ingredient of the proof of \cite[Theorem 3.1]{DynamicalC} is the $p$-adic uniformization which works over $\C_p$. Also, in the proof \cite[Theorem 3.1]{DynamicalC}, $N$ is based on the least common multiple of the orders of roots of unity in a finite extension of $\Q_p$ which is a finite set. And Now, we only need to take the least common multiple of orders of roots of unity in $\overline{\Q}_p$ of degrees bounded by a constant depending only on $D$, $f$ and $K$. Notice that the set of roots of unity in $\overline{\Q}_p$ such that their degrees are all bounded by a constant is also finite \cite[Proposition 3.6(3)]{DynamicalC}. Thus, the proof follows just as in \cite[Theorem 3.1]{DynamicalC}.
\end{proof}
\begin{proof}[Proof of Theorem \ref{thm: mainfull} in the case when ${\rm deg}(f_1),\ldots ,{\rm deg}(f_n)\ge 2$.]
    By Lemma \ref{lem: red-to-surface}, it is enough to prove the statement with the assumption that $V$ is an irreducible hypersurface projecting dominantly onto any subset of $(n-1)$ coordinate axes in $(\P^1)^n$. Since $f(V) = V$, we have that $V$ contains a Zariski dense set of preperiodic points of $f$ \cite[Theorem 5.1]{invariant-dense-prep}. Therefore, by \cite[Theorem 2.2]{invariant-varieties-splitting}, we have if $n> 2$ then 
    \begin{enumerate}
        \item either $f_1, \dots , \dots f_n$ are all Latt\`es maps;
        \item or $f_i$'s are all conjugated to $x^{\pm d_i}$ or $\pm T_{d_i}$, where $T_{d_i}$'s are Chebyshev polynomials of degree $d_i = \deg(f_i)$. 
    \end{enumerate}
    Notice that by Lemma \ref{lem: equal-degree} in both cases $d_1 = d_2 = \dots = d_n$. If $n  = 2$, then by the proof of \cite[Theorem 1.3]{invariant-varieties-splitting2} we have either $f_1$, $f_2$ are both Latt\`es maps or neither of them is. So overall we have three separate cases to show: 
    \begin{enumerate}
        \item $n = 2$, $f_1$ and $f_2$ are not Latt\`es maps;
        \item $n > 2$, $f_i$'s are conjugated to either $x^{\pm d}$ or $\pm T_d$ for some $d  > 1$;
        \item $f_i$'s are all Latt\`es maps.
    \end{enumerate}

    Case (2) is implied by Proposition \ref{prop : toris} and Case (3) is implied by Proposition \ref{prop: Lattes}. We left to prove case (1): By \cite[Corollary 4.5]{Invariant-curve-rational} and \cite[Remark 4.3]{Invariant-curve-rational}, we have that there exists rational functions $U_1$, $U_2$ and $F$ defined over $\overline{K}$ such that \[U_1 \circ F = f_1 \circ U_1,\]
    \[U_2 \circ F = f_2 \circ U_2\]
    and $W_1 = (U_1, U_2)^{-1}(V)$ is a subvariety such that $(F,F)(W_1) \subseteq W_1$ and it contains an irreducible component $V' \subseteq W$ such that $(F,F)(V') = V'$ and $(F, F) (W_1) \subseteq V'$ after replacing $F$, $f_1$ and $f_2$ by some suitable iterate. Also by replacing $K$ with some larger number field, we assume $U_1$, $U_2$ and $F$ are defined over $K$. Let $K'$ be the union of finite field extension of $K$ whose degree is bounded by $\deg(U_1)^2 = \deg(U_2)^2$ of $K$. It is enough to show that there exists a positive integer $s_0$ such that 
    \[ ((F,F)^{-s-1}(V')\setminus (F,F)^{-s}(V'))(K') = \emptyset\]
    for any integer $s \geq s_0$. 
    
    Now if $F$ is Latt\`es map or is conjugate to either a power map or a Chebyshev polynomial or its negative, we obtain the result from Proposition \ref{prop : toris} and Proposition \ref{prop: Lattes} separately. So, we assume $F$ is not a Latt\`es map nor conjugate to either a power map or a Chebyshev polynomial or its negative. Then by \cite[Theorem 4.15]{Invariant-curve-rational}, we have there exists rational functions $U_3$, $U_4$, $F_1$ and $F_2$ over $\C$ such that 
    \[ U_3 \circ F_1 = F \circ U_3,\]
    \[ U_4 \circ F_2 = F \circ U_4\]
    and $W = (U_3, U_4)^{-1}(W_1)$ is a subvariety such that $(F_1, F_2)(W) \subseteq W$ and it contains a subvariety $W'$ such that $(F_1, F_2)(W') = W'$ after replacing $F_1$, $F_2$, $f_1$, $f_2$ and $F$ by a suitable iterate. Furthermore, $F_1$ and $F_2$ are not generalized Latt\`es maps by the Theorem. Now let $\tau_1=  U_1 \circ U_3$ and $\tau_2 = U_2 \circ U_4$ and replace $K$ by a finite generated field extension of $K$ such that $\tau_1$, $\tau_2$, $F_1$ and $F_2$ are all defined over $K$. After replacing $F$ and $F_1$, $F_2$ with a suitable iterate and abusing notation to let $W$ be some irreducible component of $W'$, we may assume $(F_1,F_2)(W) = W$. Let $L$ be the union of finite field extensions of $K$ of degree bounded by $\deg(\tau_1)^2 = \deg(\tau_2)^2$. Similarly, it is enough to show that there exists a positive integer $s_0$ such that 
    \[ ((F_1,F_2)^{-s-1}(W)\setminus (F_1,F_2)^{-s}(W))(L) = \emptyset\]
    for all integers $s \geq s_0$.

   In this case, we use \cite[Theorem 1.1]{Invariant-curve-rational} and we get that there exists rational functions $X_1, X_2, Y_1, Y_2$ and $B$ such that there exists some positive integer $d$ satisfies 
    \begin{equation}\label{eq: decompF_1}
    F^d_1 = X_1 \circ Y_1
    \end{equation}
    \begin{equation}\label{eq: decompF_2}
     F^d_2 = X_2 \circ Y_2
    \end{equation}
    \begin{equation}\label{eq: decompF_3}
        B^d = Y_1 \circ X_1 = Y_2 \circ X_2, 
    \end{equation}
  $W \subseteq (Y_1,Y_2)^{-1}(\Delta)$ and also $(F^d_1, F^d_2)((Y_1, Y_2)^{-1}(\Delta)) = W$.
    
    Enlarging $K$ by adjoining coefficients of $X_1, Y_1, X_2$ and $ Y_2$ and abuse the notation to let $K$ denote this larger field and $L$ be the union of finite field extension of degree bounded by $\deg^2(\tau_1) = \deg^2(\tau_2)$ of K.
   
    Now it is enough to show that there exists a non-negative integer $s_0$ such that for any $s \geq  s_0$,
    \begin{equation}
        ((F_1, F_2)^{(-s-1)d}(W) \setminus (F_1, F_2)^{-sd}(W))(L) = \emptyset.
    \end{equation}
    We claim that to prove the above, it suffices to show that there exists $s_0$ such that for any $s \geq s_0$
   \begin{equation}\label{eq: largervariety}
       ((F_1, F_2)^{d(-s-1)} ((Y_1,Y_2)^{-1}(\Delta)) \setminus (F_1, F_2)^{-ds} ((Y_1,Y_2)^{-1}(\Delta)))(L) = \emptyset.
   \end{equation}
   To see this, suppose there exists $s'_0$ such that for any $s \geq s'_0$ Equation (\ref{eq: largervariety}) holds. If there exists a non-negative integer $s > s'_0$ and a $x \in \P^1_L \times \P^1_L$ such that $(F_1, F_2)^{sd}(x) \in W(L)$ but $(F_1,F_2)^{s'_0d}(x) \notin W$, then $(F_1,F_2)^{s'_0d}(x) \in (Y_1, Y_2)^{-1}(\Delta) \setminus W$ by Equation (\ref{eq: largervariety}). Thus $(F_1,F_2)^{(s'_0 + 1)d}(x) \in W$. Therefore, take $s_0 = s'_0 + 1$, we have for any $s \geq s_0$
   $$((F_1, F_2)^{(-s-1)d}(W) \setminus (F_1, F_2)^{-sd}(W))(L) = \emptyset. $$
   Thus, it is enough to show that there exists $s_0$ such that for any $s \geq s_0$
   \begin{equation}
       ((F_1, F_2)^{d(-s-1)} ((Y_1,Y_2)^{-1}(\Delta)) \setminus (F_1, F_2)^{-ds} ((Y_1,Y_2)^{-1}(\Delta)))(L) = \emptyset
   \end{equation}
   which is equivalent to, by Equation (\ref{eq: decompF_1}), (\ref{eq: decompF_2}) and (\ref{eq: decompF_3}), 
    \begin{equation}
        (Y_1, Y_2)^{-1}((B^d, B^d)^{-s-1}(\Delta)\setminus (B^d, B^d)^{-s}(\Delta))(L) = \emptyset.
    \end{equation}
    Since, again, $Y_1, Y_2$ are defined over $L$, it is enough to show 
    \begin{equation}\label{eq: BBpreimages}
        ((B^d, B^d)^{-s-1}(\Delta)\setminus (B^d, B^d)^{-s}(\Delta))(L) = \emptyset.
    \end{equation}
    
    Notice that if there exists a non-negative integer $s_0$ such that for any non-negative integer $s $ and any $x , y \in \P^1_L$, we have 
    \begin{equation}
        B^{ds}(x) = B^{ds}(y)
    \end{equation}
    implies 
    \begin{equation}
        B^{ds_0}(x) = B^{ds_0}(y),
    \end{equation}
    then Equation (\ref{eq: BBpreimages}) holds for $s \geq s_0$. While this is proved by Proposition \ref{prop: cancellation=curve}. The result follows.

\end{proof}
\begin{proof}[Proof of Theorem \ref{thm: mainfull} in the general case.]
By replacing $f$ by some suitable iterate, we have that each irreducible component of $V$ is also an invariant subvariety of $f$. It is enough to prove the theorem for each irreducible component of $V$, so we assume that $V$ is irreducible. 

If there doesn't exist $f_i$ such that $\deg(f_i) = 1$, for $i \in \{1, \dots, n\}$, then the statement has been proved.  Thus we may assume that at least one $f_i$ is an automorphism. We reorder the coordinates so that there exists a positive integer $k \in \{2, \dots , n-1\}$ such that $\deg(f_i) > 1$ when $i \leq k$ and $\deg(f_i) = 1$ when $i > k$. Now Lemma \ref{lem:reduce-to-alldegreenontrivial} implies that $V = V_1 \times V_2$, where $V_1\subseteq (\P^1)^k$, $V_2 \subseteq (\P^1)^{n-k}$ such that $g_1(V_1) = V_1$ and $g_2(V_2) = V_2$, where $g_1 = (f_1, \dots, f_k )$ and $g_2 = (f_{k+1}, \dots, f_n)$. Notice that $g_2^{-1}(V_2) = V_2$ and there exists a non-negative integer $s_0$ such that $(g_1^{-s-1}(V_1) \setminus g_1^{-s}(V_1)(K) = \emptyset$ for all $s\geq s_0$, since we have established Theorem \ref{thm: mainfull} in the case when the maps all have degree $\ge 2$. Thus, or any non-negative integer $s \geq s_0$,
\[ (f^{-s-1}(V)\setminus f^{-s}(V))(K) \]
\[= (g_1^{-s-1}(V_1) \setminus g_1^{-s}(V_1))(K) \times (g_2^{-s-1}(V_2) \setminus g_2^{-s}(V_2))(K) = \emptyset.\]
The result follows.

\end{proof}
\section*{Acknowledgments} The author thanks Jason Bell for helpful comments and suggestions and Tom Tucker for inspiring discussions. The author thanks Yohsuke Matzusawa and Kaoru Sano for their helpful comments on an earlier version of the draft and for sharing their recent preprint \cite{OnPreimagesProblem} where they proved the same result with $n = 2$ but used a different approach. The author also thanks Junyi Xie for pointing out the reference \cite{Xie2023}.
\bibliographystyle{amsplain}
\bibliography{XiaopaperNov62023}
\end{document}